\theoremstyle{plain}
\newtheorem{theorem}{Theorem}
\newtheorem{lemma}{Lemma}
\newtheorem{prop}{Proposition}
\theoremstyle{definition}
\newtheorem{definition}{Definition}
\theoremstyle{remark}
\newcommand{\noi}{\noindent}
\newcommand{\R}{\mathbb{R}}
\newcommand{\C}{\mathcal{C}}
\renewcommand{\epsilon}{\varepsilon}
\newcommand{\p}{\partial}
\newcommand{\eg}{\emph{e.g.}}
\title{\LARGE \bf
Sufficient conditions for time optimality of\\systems with control on the disk}
\author{Jean-Baptiste Caillau$^{1}$ and \underline{Micha\"el Orieux}$^{2}$%
\thanks{*This work was supported by the FMJH Program PGMO and EDF-Thales-Orange
(PGMO grant no. 2016-1753H).}
\thanks{$^{1}$Jean-Baptiste Caillau is with
        Universit\'e C\^ote d'Azur, CNRS, Inria, LJAD, France
        {\tt\small jean-baptiste.caillau@univ-cotedazur.fr}}%
\thanks{$^{2}$Micha\"el Orieux is with
	  SISSA, Via Bonomea 265, I-34136 Trieste
        {\tt\small morieux@sissa.it}}%
}
\begin{document}

\maketitle
\thispagestyle{empty}
\pagestyle{empty}

\begin{abstract}
The case of time minimization for affine control systems with control on the disk is
studied. After recalling the standard sufficient conditions for local optimality in
the smooth case, the analysis focusses on the specific type of singularities
encountered when the control is prescribed to the disk.
Using a suitable stratification, the regularity of the flow is analyzed, which helps to
devise verifiable sufficient conditions in terms of left and right limits of Jacobi
fields at a switching point. Under the appropriate assumptions, piecewise regularity
of the field of extremals is obtained.
\end{abstract}

\section{INTRODUCTION}
\noi In this paper we are interested in the minimum time control of some affine
control problems - namely, in dimension four for a rank two distribution. We deal with
sufficient conditions for optimality of extremal trajectories of such systems.
	This topic is a very active field of research, and a variety of different approaches exist 
	and have been applied to a large number of problems. 
	Geometric methods hold an important
	place in that regard.
	When the extremal flow is smooth, the 
	theory of conjugate points can be applied, and local optimality holds before the
	 first conjugate time. We recall this result below.
    The points where the extremal ceases to be globally optimal are cut points,
    and it is an extremely delicate task to compute cut points and cut loci
    although some low dimensional situations can be addressed (see, \eg,
    \cite{cutloci} where an approximation through averaging of the initial problem is
    studied). 
    Unfortunately, we rarely encounter the smooth case in practice, 
    and there is a lack of general
    method overcoming the different kind of singularities. 
    An extension of the smooth case method which uses the 
    Poincar\'e-Cartan integral invariant, see \cite{agrachev13},
     is easier to 
    generalize to non-smooth cases, and has been used to prove 
    local optimality for $L^1$ minimization
    of mechanical systems for instance, in \cite{caillau2016}. We
    use a similar technique to prove theorem 
    \ref{theoremsuf}, the main difference being the type of
    singularity: $L^1$-minimization of the control creates singularities of codimension
    one, and the extremal flow is
    the concatenation of the flows of two regular Hamiltonians. In
    our case, we have codimension two (and so unstable) singularities, and a Hamiltonian which fails to be 
    Lipschitzian.
    When the control lies in a box, second order conditions can be of use
    through a finite dimensional subsystem given by allowing the switching times to variate.
     Those techniques have been initiated by Stefani and Poggiolini, 
     see for instance \cite{gianna}.
      The majority of these works prove local optimality for normal
    extremal, and a few of them tackle the abnormal case. One can cite for 
    instance \cite{trelatabnormal} where single input systems are handled and
    can refer as well to \cite{numcontrolpol} where theoretical as well as numerical
    studies are leaded when the control lies in a polyhedron. We will also tackle
     only the normal case in the following, 
    since the co-dimension two singularity induced by minimizing the final
    time is our main focus. The recent paper \cite{biolosuf}
     from Agrachev and Biolo, proves local optimality of these broken extremal around
     the singularity with extra hypothesis on the adjoint state.
     Our approach is similar while in a slightly different framework (more suitable
     for mechanical systems) and easily checked by a simple numerical test.
     Thanks to this optimality analysis, we can investigate the regularity of a upper
     bound to the value function of this time optimal problem and prove
     that it is piecewise smooth. 

\section{The smooth theory} \label{sec:smooth}
\noi Let us begin by recalling the classical smooth case.
Consider an optimal control system on a manifold $M$:
\begin{equation}
\begin{cases}
\dot{x}=f(x,u),\; u\in U,\\
x(0)=x_0,\; x(t_f)=x_f,\\
C(u)=\int_0^{t_f}\varphi(x(t),u(t))dt\rightarrow\min
\end{cases}
\label{opt}
\end{equation} 
where $U\subset\mathbb{R}^m$, and $f:U\times M\rightarrow TM$ is a smooth family of vector field, and $\varphi:M\times U\rightarrow\mathbb{R}$ is the cost function. Define $H(x,p,p^0,u)=\langle p, f(x,u)\rangle+p^0\varphi(x,u)$ - $(x,p)\in T^*M$, $p^0$ a negative number, and u$\in U$ - the pseudo-Hamiltonian
associated with (\ref{opt}). By the classical Pontrjagin maximum principle
\cite{pontryagin1987mathematical}, optimal trajectories $x(t)$ associated with an
optimal control $u(t)$ are projections of the solutions $(x(t),p(t))$ of the
Hamiltonian system associated to $H$ such that, almost everywhere,
$H(x(t),p(t),u(t))=\underset{\tilde{u}\in U}{\max}H(x(t),p(t),\tilde{u})$. Solutions of this Hamiltonian system are called extremals.
We adapt a method presented in \cite{agrachev13}, \cite{caillau2016}
to deal with a codimension one singularity set.
Assume now that
\[ H^{\max}(x,p)=\underset{u\in U}{\max}H(x,p,u) \]
is $\C^2$-smooth and denote $\bar{z}(t)=(\bar{x}(t),\bar{p}(t))$, $t\in[0,\bar{t}_f]$, the
  extremal starting from $\bar{z}_0\in T^*M$. Let $\bar{u}$ be the
associated control, and 
consider the variational equation along $\bar{z}(t)$:
\begin{equation}
\dot{\delta z}=J\nabla^2 H(z(t))\delta z
\label{variationalsuf}
\end{equation}
where $J=\begin{pmatrix} 0 & -1\\
1 & 0
\end{pmatrix}.$ Solutions of (\ref{variationalsuf}) are called Jacobi fields.

\begin{definition}[Conjugate times]
A time $t_c$ is called a conjugate time if there exists a Jacobi field $\delta z$ 
such that 
$$d\pi(z(0))\delta z(0)=d\pi(z(t_c))\delta z(t_c)=0, $$
that is $\delta x(0)=\delta x(t_c)=0$, $\pi:T^*M \to M$ being the canonical projection.
We say that $\delta z$ is vertical at $0$ and $t_c$. The point $x(t_c)=\pi (z(t_c))$
 is a conjugate point.
\end{definition}

The following result implies optimality until the first conjugate time.
\begin{theorem}
	Assume that
	\begin{itemize}
		\item[(i)] The reference extremal $\bar{z}$ is normal,
		\item[(ii)] $\frac{\p x}{\p p_0}(t,\bar{z}_0)\neq0$ for all $t\in(0,t_f]$.
	\end{itemize}
	Then the reference trajectory $x$ is a local minimizer among all the
	$\C^0$-admissible trajectories with same endpoints.
	\label{sufsmooth}
\end{theorem}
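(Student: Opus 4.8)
The plan is to embed the reference extremal in a \emph{central field of extremals} and to calibrate it against competitors by means of the Poincaré--Cartan integral invariant, following the scheme of \cite{agrachev13,caillau2016}. Since $\bar z$ is normal by (i), I would normalize $p^0=-1$ and work with the true Hamiltonian $h:=H^{\max}(\cdot,\cdot,-1)$, which is $\C^2$ by assumption, so the extremal flow is the genuine Hamiltonian flow $e^{t\vec h}$ of $h$. First I would consider the Lagrangian submanifold $L_0=T^*_{x_0}M$ (the fibre over the initial point) and propagate it, setting $L_t:=e^{t\vec h}(L_0)$; the reference extremal is the leaf through $\bar z_0$. The goal is to show that the projections $\pi(L_t)$ sweep out a tubular neighbourhood $\mathcal N$ of $\bar x((0,t_f])$ without focusing, so that the extremals define over $\mathcal N$ a single-valued smooth covector field $x\mapsto\mathcal P(x)$ whose integral curves are exactly the extremals.

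Next I would show that condition (ii) is precisely this non-focusing statement. By the Definition above, a conjugate time $t_c\in(0,t_f]$ is one at which some Jacobi field vertical at $0$ (i.e.\ $\delta x(0)=0$, equivalently an initial variation $\delta p_0$ of the covector) becomes vertical again, $\delta x(t_c)=0$. Such a field exists iff the differential $\delta p_0\mapsto\delta x(t)=\frac{\p x}{\p p_0}(t,\bar z_0)\,\delta p_0$ has nontrivial kernel. Hence (ii), read as the non-degeneracy of $\frac{\p x}{\p p_0}(t,\bar z_0)$ for every $t\in(0,t_f]$, is equivalent to the absence of conjugate points up to $t_f$, which makes $\pi\circ e^{t\vec h}|_{L_0}$ a local diffeomorphism for each such $t$ and legitimates the field $\mathcal P$ on $\mathcal N$.

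With the field at hand I would invoke the Poincaré--Cartan $1$-form $\omega=p\,\d x-h\,\d t$ on the extended phase space. Restricted to the field (which is Lagrangian and flow-invariant) $\omega$ is closed, so its integral over a curve joining $z_0$ to $z(t_f)$ inside $\mathcal N$ is path-independent. Along the reference one computes $\int_{\bar z}\omega=\int_0^{t_f}\big(\langle\bar p,\dot{\bar x}\rangle-h(\bar z)\big)\,\d t=\int_0^{t_f}\vphi(\bar x,\bar u)\,\d t=C(\bar u)$, using $h(\bar z)=\langle\bar p,\dot{\bar x}\rangle-\vphi(\bar x,\bar u)$. For any $\C^0$-admissible competitor $(x,u)$ with the same endpoints and staying in $\mathcal N$, I would lift it through the field, $t\mapsto(x(t),\mathcal P(x(t)))$, and use the pointwise maximality $h(x,\mathcal P(x))\ge H(x,\mathcal P(x),u)=\langle\mathcal P(x),\dot x\rangle-\vphi(x,u)$ to get $\int_{\mathrm{lift}}\omega\le C(u)$. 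Path-independence then yields $C(\bar u)=\int_{\bar z}\omega=\int_{\mathrm{lift}}\omega\le C(u)$, i.e.\ local optimality in the strong ($\C^0$) sense. For the free-final-time (minimum-time) case one further uses $h\equiv0$ on the level set, so that $\omega=p\,\d x$ and the comparison is insensitive to the differing final times.

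I expect the main obstacle to be the behaviour at the initial time: the central field issued from the single point $x_0$ has a focal singularity at $t=0$, where $L_0$ is the vertical fibre and $\pi|_{L_t}$ is not a diffeomorphism, so $\mathcal N$ and $\mathcal P$ cannot be extended across $x_0$ and the calibration cannot be run naively down to $t=0$. I would handle this by running the comparison on $[\veps,t_f]$ within the regular part of the field and passing to the limit $\veps\to0$, controlling the contribution of the initial arc through the $\C^0$-closeness of the competitor; verifying that an arbitrary $\C^0$-competitor admits a well-defined lift remaining in $\mathcal N$, so that the fixed-endpoint homotopy applies, is the other point requiring care.
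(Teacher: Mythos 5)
Your overall scheme --- propagate a Lagrangian manifold by the extremal flow, show the Poincar\'e--Cartan form $\sigma=p\,\d x-H^{\max}\d t$ is closed (hence exact) on the resulting tube, lift the competitor, and compare through maximality of the Hamiltonian --- is exactly the proof the paper intends for Theorem~\ref{sufsmooth} (it is the scheme of \cite{agrachev13}, \cite{caillau2016}, carried out in detail in Section~\ref{sec:proofsuf} for the nonsmooth extension). The reading of (ii) as disconjugacy, the closedness computation, and the final chain $C(\bar u)=\int_{\bar z}\sigma=\int_{\mathrm{lift}}\sigma\le C(u)$ are all correct.

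The genuine gap is at the point you flag as the ``main obstacle'' and then dispose of too quickly: the initial time. Taking $L_0=T^*_{x_0}M$ (a central field) and running the comparison on $[\veps,t_f]$ with $\veps\to 0$ is not justified by the hypotheses, and is not what the paper does. To compare on $[\veps,t_f]$ you must first \emph{lift} the competitor near $t=\veps$, i.e.\ show $x(\veps)$ lies in the region swept by the field with a controlled covector, and then estimate the boundary correction coming from the mismatch of the two lifts at time $\veps$. Since $\frac{\p x}{\p p_0}(t,\bar z_0)\to 0$ as $t\to 0^+$, the region swept near $x_0$ degenerates, while the competitor only satisfies the Lipschitz bound $|x(\veps)-\bar x(\veps)|=O(\veps)$; for the lift to exist and stay bounded you need a quantitative lower bound (of order $t$) on $\frac{\p x}{\p p_0}(t,\cdot)$ uniformly near $\bar p_0$, which does not follow from the pointwise condition (ii). Worse, in the problem motivating the paper (time minimization, where the $p$-dependent part of $H^{\max}$ is positively homogeneous of degree one, so $\p^2 H^{\max}/\p p^2$ annihilates $p$ and is everywhere degenerate), the naive asymptotics $\frac{\p x}{\p p_0}(t)\sim t\,\p^2_{pp}H^{\max}(\bar z_0)$ on which such a bound would rest actually fails, and the image of the time-$\veps$ exponential is a lower-dimensional front rather than a neighbourhood of $\bar x(\veps)$. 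The idea you are missing is the one the paper points to (``some part of the proof is dedicated to extend condition (ii) to $t=0$'') and implements as Lemma~\ref{lemlag}, following the appendix of \cite{caillau2016}: do not use the fiber at all. Replace it by a Lagrangian manifold $\mathcal{L}$ through $\bar z_0$, transverse to the fiber, tangent to a graph $\{\delta x_0=S_0\,\delta p_0\}$ with $S_0>0$ symmetric and small. Nondegeneracy of the projection then holds at $t=0$ by construction, persists for small $t$ by the monotonicity $\dot S(t)\ge 0$ of $S(t)=\delta\tilde x(t)\,\delta\tilde p(t)^{-1}$ (a consequence of the first and second order conditions on the maximized Hamiltonian), and holds up to $t_f$ by (ii) and continuity, provided $S_0$ is small. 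The tube then projects onto a full neighbourhood of the whole arc $\bar x([0,t_f])$ including $x_0$, every $\C^0$-close competitor lifts on all of $[0,t_f]$ with endpoints forced to agree with those of the reference lift, exactness of $\sigma$ follows by choosing $\mathcal{L}$ as the graph of the differential of a function, and no limiting procedure is needed.
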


\noi Assumption (ii) ensures disconjugacy along the reference extremal, and 
can be verified through a simple numerical test.
The proof consists in devising a Lagrangian manifold, and in propagating it using
the extremal flow.
One can then prove that the projection $\pi$ is invertible on a suitable submanifold: this allows to lift
admissible trajectories with same endpoints to the cotangent bundle, and to compare
them
using the Poincar\'e-Cartan invariant.
We will extend this proof to the non-smooth case encountered when minimizing time with
control on the disk.

\section{Setting} \label{sec:suff}
\noi Consider the following optimal time control system
\begin{equation}
\begin{cases}
\dot{x}=F_0(x)+u_1F_1(x)+u_2F_2(x), \\
t\in[0, t_f], \quad |u_1|^2+|u_2|^2 \leq 1,\\
x(0)=x_0,\quad x(t_f)=x_f,\\
t_f\rightarrow \min
\end{cases}
\label{Tmin3}
\end{equation}
so that the control set $U$ is the Euclidean disk and the fields $F_i$ are defined on a smooth four dimensional manifold $M$. We will use the following notation: $[F_i,F_j]:=F_{ij}$ for Lie brackets and $\{H_i,H_j\}:=H_{ij}$ for Poisson brackets. We denote 
$\mathcal{U}=L^{\infty}([0,t_f],U)$ the set of admissible controls, and 
make the following assumption: 
\begin{equation}
  \det(F_1(x), F_2(x), F_{01}(x), F_{02}(x))\neq 0,\quad x \in M.
\label{eq:A3}
\tag{A1}
\end{equation}
The (non-smooth) maximized Hamiltonian is
	\[H^{\max}(z)=H_0(z)+\sqrt{H_1^2(z)+H_2^2(z)}+p^0,\]
and the singular locus is $\Sigma:=\{H_1=H_2=0\}$.
One can make a comparison between those singularities and the 
double switchings obtained by taking $U=[-1,1]^2$ (or even $[-1,1]^m$). 
It has been
proved in \cite{controlbox} that extremals are optimal assuming some strong
Legendre-type conditions and the coerciveness of a second variation
to a finite dimensional problem obtained by perturbation of the switching times. 
If this result holds also for the abnormal case, our theorem does not require any coerciveness assumption.
The singular set (or \emph{swithching surface}) $\Sigma$ is partitioned into
three subsets as follows:
\begin{align*}
	\Sigma_- &=\{z \in T^*M\ |\ H^2_{12}(z)<H^2_{02}(z)+H^2_{01}(z)\},\\
	\Sigma_+ &=\{z \in T^*M\ |\ H^2_{12}(z)>H^2_{02}(z)+H^2_{01}(z)\},\\
	\Sigma_0 &=\{z \in T^*M\ |\ H^2_{12}(z)=H^2_{02}(z)+H^2_{01}(z)\}.
\end{align*}
According to \cite{orieuxcaillau18} and \cite{orieuxthese},
no regular extremal can reach $\Sigma_+$, so all extremals around this
 set are smooth, and Theorem~ \ref{sufsmooth} applies. The singular extremals
lying inside cannot be optimal via the Goh condition \cite{Goh}. 
According to Pontrjagin's Maximum principle, minimization of the final time implies
that normal extremals lie in the level sets $H=0$, for some $p^0\leq0$; normal extremals correspond to $p^0<0$ and abnormals to $p^0=0$.
We will deal with the $\Sigma_-$ case, which is the most relevant for applications,
notably because it contains mechanical systems.
We recall the result below from \cite{orieuxcaillau18}. (See also \cite{biolo-2017b}.)

\begin{theorem}
	In a neighbourhood $O_{\bar{z}}$ with $\bar{z}\in\Sigma_-$, existence and uniqueness
	of solution for the extremal flow hold, and all extremals are bang-bang,
	with at most 
	one switch. The extremal flow $z:(t,z_0)\in[0,t_f]\times O_{\bar{z}}\mapsto z(t,z_0)\in M$ is
	piecewise smooth. More precisely, there is a stratification
	$$O_{\bar{z}}=S_0\cup S^s\cup S^u\cup\Sigma$$ 
	where \begin{enumerate}
	    \item $S^s$ (resp. $S^u$) is the stable codimension-one submanifold of initial
	conditions leading to the switching surface (resp. in negative times),
	$S_0=O_{\bar{z}}\setminus (S^s\cup \Sigma)$;
	\item  the flow is smooth on $[0,t_f]\times S_0$, and 
	on $[0,t_f]\times S^s\setminus\Delta$  where $$\Delta:=\{(\bar{t}(z_0),z_0),\; z_0\in S^s\},$$
	and $\bar{t}(z_0)$ is the switching time of the extremal initializing at $z_0$;
	\item it is continuous on $O_{\bar{z}}$.
	\end{enumerate}
	\label{theorem}
\end{theorem}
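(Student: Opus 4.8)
The plan is to \emph{desingularize} the extremal flow in a neighbourhood of $\Sigma$, turning the non-Lipschitz Hamiltonian system into a genuinely smooth dynamical system for which $\Sigma$ becomes a hyperbolic (saddle-type) invariant set; every assertion then follows from an invariant-manifold theorem together with elementary bookkeeping. Away from $\Sigma$ the maximization in the Pontrjagin principle is solved by the smooth feedback $u=(H_1,H_2)/\rho$, with $\rho:=\sqrt{H_1^2+H_2^2}$, so $H^{\max}$ is $\C^\infty$-smooth there and existence, uniqueness and smoothness of the flow are classical. The whole difficulty is concentrated on the approach to $\Sigma=\{H_1=H_2=0\}$, where $\rho$ has a conic singularity and $H^{\max}$ fails to be Lipschitz.

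First I would write the evolution of the switching functions along an extremal. From $\dot H_i=\{H^{\max},H_i\}$ and antisymmetry of the brackets one gets
\[ \dot H_1=H_{01}-\frac{H_2}{\rho}\,H_{12},\qquad \dot H_2=H_{02}+\frac{H_1}{\rho}\,H_{12}, \]
a vector field that is bounded but \emph{discontinuous} at $\Sigma$, which is exactly why Cauchy--Lipschitz does not apply. Passing to polar coordinates $(H_1,H_2)=\rho(\cos\psi,\sin\psi)$ isolates the singularity in the angle:
\[ \dot\rho=H_{01}\cos\psi+H_{02}\sin\psi,\qquad \dot\psi=\frac{1}{\rho}\bigl(H_{12}+H_{02}\cos\psi-H_{01}\sin\psi\bigr). \]
Reparametrizing time by $\d s=\d t/\rho$ removes the singular factor and yields a smooth system; freezing the brackets at their values $a:=H_{01}(\bar z)$, $b:=H_{02}(\bar z)$, $c:=H_{12}(\bar z)$ on $\Sigma$, its principal part is the planar field $\rho'=\rho(a\cos\psi+b\sin\psi)$, $\psi'=c+b\cos\psi-a\sin\psi$, the remaining cotangent coordinates being slaved to it with right-hand sides of order $\rho$.

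The dichotomy between the strata now reads off the $\psi$-equation: $c+b\cos\psi-a\sin\psi$ admits a zero if and only if $c^2\le a^2+b^2$, i.e. precisely on $\overline{\Sigma_-}$. For $\bar z\in\Sigma_-$ the inequality is strict, so there are two \emph{hyperbolic} equilibrium angles $\psi^-\neq\psi^+$, at which $\rho'$ has a definite nonzero sign; the corresponding sets $\{\rho=0,\ \psi=\psi^\pm\}$ are saddle-type normally hyperbolic invariant manifolds with one-dimensional stable and one-dimensional unstable normal directions. The stable (resp. unstable) manifold theorem then produces the codimension-one submanifolds $S^s$ and $S^u$ of initial data whose extremal reaches $\Sigma$ in positive (resp. negative) time --- exponentially in $s$, hence in finite original time since $\int\rho\,\d s<\infty$, which gives the smooth switching time $\bar t(z_0)$ --- while $S_0$ is the open complement on which $\rho$ stays bounded away from $0$. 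Because the crossing is transverse and the reparametrization is a diffeomorphism off $\{\rho=0\}$, an extremal issued in $O_{\bar z}$ can meet $\Sigma$ at most once; at that instant $(H_1,H_2)$ passes continuously through the origin while the feedback angle jumps from $\psi^-$ to $\psi^+$, which is the bang-bang-with-a-single-switch statement and simultaneously yields existence and uniqueness \emph{across} the singularity. Regularity is then bookkeeping: on $[0,t_f]\times S_0$ and on $[0,t_f]\times S^s\setminus\Delta$ the flow is a composition of the smooth flow of $H^{\max}$ off $\Sigma$ with the smooth entry and exit maps of the invariant-manifold construction, hence smooth, the only possible loss of differentiability being along $\Delta$ where the control direction is discontinuous; continuity on all of $O_{\bar z}$ follows because the two one-sided limits of $z(t,\cdot)$ at a switch both solve the desingularized Cauchy problem through the same equilibrium, which is unique.

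I expect the genuinely hard part to be this last point: propagating uniqueness and continuity \emph{through} the non-Lipschitz locus $\Sigma$, and upgrading the abstract invariant sets into honestly embedded codimension-one submanifolds $S^s,S^u$ with a smooth $\bar t$. This is exactly where classical theory fails, and where the hypothesis $H_{12}^2<H_{01}^2+H_{02}^2$ defining $\Sigma_-$ must be used in an essential way, since it is what makes the desingularized equilibria hyperbolic (a saddle) rather than a centre --- the latter being the $\Sigma_+$ case in which no extremal reaches $\Sigma$. Care is also needed to check that assumption \eqref{eq:A3} forces $a^2+b^2\neq0$ on $\Sigma$ (otherwise $p$ would annihilate $F_1,F_2,F_{01},F_{02}$), which is what keeps the two $\psi$-equilibria simple and well separated.
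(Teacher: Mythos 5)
A preliminary caveat: the paper does not prove Theorem~\ref{theorem} at all --- it is recalled verbatim from the companion work \cite{orieuxcaillau18} (see also \cite{biolo-2017b}) --- so there is no internal proof to compare yours against. Judging from the traces that reference leaves in the present text (the ``normal form of proposition 2'' invoked in the proof of the proposition of Section~\ref{sec:suff}, the stable/unstable manifolds $S^s$, $S^u$, and the log-exp regularity cited in Section~\ref{sec:valuefunc}), your desingularization scheme is essentially the argument used there: polar blow-up $(H_1,H_2)=\rho(\cos\psi,\sin\psi)$, time rescaling $\d s=\d t/\rho$, hyperbolic equilibria on $\{\rho=0\}$, and invariant-manifold theory. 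Your computations check out: the two normal eigenvalues at $(\rho,\psi)=(0,\psi^\pm)$ are $\pm\lambda$ with $\lambda=H_{01}(\bar z)\cos\psi^\pm+H_{02}(\bar z)\sin\psi^\pm$, and $\lambda=0$ combined with $\psi'=0$ forces $H_{01}^2+H_{02}^2=H_{12}^2$, so hyperbolicity (saddle) is exactly equivalent to $\bar z\in\Sigma_-$, as you claim; assumption \eqref{eq:A3} does force $H_{01}^2+H_{02}^2\neq 0$ on $\Sigma$ along extremals, as you also note.

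Two steps of your sketch are thinner than they ought to be. First, ``bang-bang with at most one switch'' and uniqueness through a point of $\Sigma_-$ require ruling out extremals that \emph{remain} on $\Sigma$ (singular arcs); this is not a by-product of the blow-up but of the pointwise computation that the unique control keeping $H_1=H_2=0$ is $u=(-H_{02}/H_{12},\,H_{01}/H_{12})$, whose norm $\sqrt{H_{01}^2+H_{02}^2}/|H_{12}|$ exceeds $1$ precisely on $\Sigma_-$, hence is inadmissible there; your argument nowhere excludes this alternative. Second, continuity of the flow on all of $O_{\bar z}$ (item 3 of the theorem) concerns trajectories near but not on $S^s$: these never meet $\Sigma$, they pass \emph{close to} the saddle, and your one-line justification (``both one-sided limits solve the same desingularized Cauchy problem'') simply does not apply to them. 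What is needed is a uniform estimate on the transition (Dulac) map near the hyperbolic equilibria; note that the eigenvalue ratio is the resonant value $-1$, which is exactly why logarithmic terms appear, why the flow is merely continuous across $\Delta$ while smooth on the strata, and why the exponential map lands in the log-exp category as stated in Section~\ref{sec:valuefunc}. Neither issue breaks your architecture, but both are genuine work that the cited reference has to carry out.
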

\noi The set $S^s$ is the ensemble of initial conditions brought to the singular locus
by the flow,
$S^u$ is the set a initial conditions converging to $\Sigma$ in negative times.
In other words, the image of $S^s$ by the flow for times greater
than $\bar{t}(z_0)$.

\textbf{Example.}
A simple example of such a control system is given by nilpotent approximation of the minimum time Kepler (i.e., two-bodies) problem:

\begin{equation} \label{eq40}
\left\{\begin{array}{ll} \dot{x}_1=1+x_3 &
\dot{x}_3=u_1\\
\dot{x}_2=x_4 & \dot{x}_4=u_2\\\end{array} \right.
\end{equation}
with control on the $2$-disk, $u_1^2+u_2^2 \leq 1$. 

\noi Let $z_0\in S^s$.
\begin{prop}
The limits $\dot{z}(\bar{t}(z_0)_{\pm},z_0)$ as well as 
$\frac{\p z}{\p z_0}(\bar{t}(z_0),z_0)$ are well defined. 
\end{prop}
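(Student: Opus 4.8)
The plan is to reduce the statement to the behaviour of the maximizing control direction near the switching surface, and then to resolve the $1/\sqrt{H_1^2+H_2^2}$ singularity by a blow-up of $\Sigma$. Since an extremal satisfies $\dot z = \vec{H}_0(z)+u_1\vec{H}_1(z)+u_2\vec{H}_2(z)$ with the $\vec{H}_i$ smooth Hamiltonian vector fields and, by Theorem~\ref{theorem}, $z(t,z_0)\to z(\bar t(z_0),z_0)$ as $t\to\bar t(z_0)$, the one-sided limits of $\dot z$ exist as soon as those of the control $u(t)=(H_1,H_2)(z(t))/\sqrt{H_1^2+H_2^2}(z(t))$ do (the fields $\vec{H}_1,\vec{H}_2$ being independent by the non-degeneracy assumption (A1)). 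First I would write the switching-function dynamics: setting $h_i(t)=H_i(z(t))$ and $r=\sqrt{h_1^2+h_2^2}$, a Poisson-bracket computation along the flow gives $\dot h_1 = -H_{01}+u_2H_{12}$ and $\dot h_2=-H_{02}-u_1H_{12}$. Passing to polar coordinates $h_1=r\cos\theta$, $h_2=r\sin\theta$, so that $u=(\cos\theta,\sin\theta)$, this becomes $\dot r = -(H_{01}\cos\theta+H_{02}\sin\theta)$ and $r\dot\theta = H_{01}\sin\theta-H_{02}\cos\theta-H_{12}$, which isolates the singularity as the factor $1/r$ in the $\theta$-equation.

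The core step is the desingularization. I would reparametrize time by $\d t=r\,\d s$, which turns the $\theta$-equation into $\theta'=g(\theta):=H_{01}\sin\theta-H_{02}\cos\theta-H_{12}$ and multiplies all the remaining components by $r$, producing a vector field that extends smoothly up to the exceptional divisor $\{r=0\}$, on which the dynamics reduces to the circle equation $\theta'=g(\theta)$. Here the defining inequality of $\Sigma_-$, namely $H_{12}^2<H_{01}^2+H_{02}^2$ at $\bar z$, is exactly what guarantees that $g$ has two simple zeros $\theta^\mp$ with $g'(\theta^\mp)=\pm\sqrt{H_{01}^2+H_{02}^2-H_{12}^2}\neq0$. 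In the $(r,\theta)$ normal directions these are saddles (eigenvalues $\mp g'(\theta^\mp)$ in $r$ and $\pm g'(\theta^\mp)$ in $\theta$), and the stratification of Theorem~\ref{theorem} identifies $S^s$ (resp. $S^u$) with the stable (resp. unstable) manifold of the hyperbolic locus over $\theta^-$ (resp. $\theta^+$). Reaching $\Sigma$ in finite original time $\bar t(z_0)$ corresponds to $s\to+\infty$, since $r\sim g'(\theta^-)(\bar t(z_0)-t)$ forces $s\sim-\log(\bar t(z_0)-t)$; hence for $z_0\in S^s$ the trajectory converges to $\theta^-$, so $\theta(t)\to\theta^-$ and $u(t)\to u^-=(\cos\theta^-,\sin\theta^-)$ as $t\to\bar t(z_0)^-$. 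Consequently $\dot z(\bar t(z_0)_-,z_0)=\vec{H}_0+\cos\theta^-\vec{H}_1+\sin\theta^-\vec{H}_2$ evaluated at $z(\bar t(z_0),z_0)$ is well defined, and the right limit is obtained identically on the outgoing side with $\theta^-,S^s$ replaced by $\theta^+,S^u$.

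For the Jacobi field I would differentiate the desingularized flow with respect to $z_0$. The point is that, although the variational equation in the original time carries the same $1/r$ singularity coming from $D\vec{H}^{\max}$ and so cannot be treated by a direct Gronwall estimate, in the rescaled time $s$ the approach to the hyperbolic locus is exponential. Smooth dependence on initial conditions together with the stable-manifold theorem then yield that the variational matrix of the desingularized flow admits a limit as $s\to+\infty$, depending $\C^1$ on $z_0$ along $S^s$. Blowing down, which is a diffeomorphism off the divisor, and using that $\bar t(z_0)$ is finite, transfers this to a well-defined one-sided limit of $\frac{\p z}{\p z_0}$ at $t=\bar t(z_0)$; the outgoing side furnishes the other one-sided limit. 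I expect this last step to be the main obstacle: one must verify that the exponential contraction in $s$ strictly dominates the logarithmic divergence of the time change $s(t)\sim-\log(\bar t(z_0)-t)$, so that the limiting Jacobi field is genuinely finite, and that the two one-sided limits so produced are precisely the left and right Jacobi data entering the sufficient optimality conditions developed next.
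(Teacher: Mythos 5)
The first half of your argument is correct, and it is essentially a reconstruction of the machinery that the paper simply imports from \cite{orieuxcaillau18}: the polar coordinates $h_1=r\cos\theta$, $h_2=r\sin\theta$, the rescaling $\d t=r\,\d s$, and the hyperbolicity of the two zeros of $g$ on $\Sigma_-$ (your eigenvalue claim is confirmed by the identity $\dot r=-g'(\theta)$) are exactly the content of the normal form (proposition 2) of that reference. By contrast, the paper's own proof is two lines and citation-based: the one-sided limits of $\dot z$ exist because the control of an extremal has well-defined one-sided limits at a switching time, and the limit of $\frac{\p z}{\p z_0}$ exists because the contact-point map $z_0\in S^s\mapsto\bar z(z_0)=z(\bar t(z_0),z_0)$ is smooth by that normal form; since $\bar t$ is smooth and $\dot z(\bar t(z_0)_\pm,z_0)$ exist, the chain rule $d\bar z(z_0)=\dot z(\bar t(z_0)_\pm,z_0)\,d\bar t(z_0)+\frac{\p z}{\p z_0}(\bar t(z_0)_\pm,z_0)$ yields both one-sided limits at once.

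The genuine gap is in your third paragraph, on the Jacobi fields, and it is not only the point you flag. First, the claim that smooth dependence on initial data plus the stable-manifold theorem give convergence of ``the variational matrix of the desingularized flow'' as $s\to+\infty$ is false as stated: along a trajectory tending to a saddle, the variational matrix blows up exponentially in the unstable directions (already $\dot x=x$, $\dot y=-y$ gives $\mathrm{diag}(e^s,e^{-s})$). What you need is the refined statement for the flow \emph{restricted to the invariant manifold} $S^s$: near the normally hyperbolic manifold of equilibria $\{r=0,\ g=0\}$, the derivative of the restricted time-$s$ map converges to the derivative of the asymptotic-phase projection, and this rests on smoothness of the stable foliation, not on the stable manifold theorem alone. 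Second, the blow-down difficulty is misdiagnosed: writing $z(t,z_0)=\tilde z(s(t,z_0),z_0)$, one has $\frac{\p z}{\p z_0}=\frac{\p\tilde z}{\p s}\frac{\p s}{\p z_0}+\frac{\p\tilde z}{\p z_0}$, where $\frac{\p\tilde z}{\p s}\sim e^{-\lambda s}$ (with $\lambda=g'(\theta^-)$) but $\frac{\p s}{\p z_0}=-r(s)^{-1}\int_0^s\p_{z_0}(r\circ\tilde z)\,\d\sigma$ grows like $e^{+\lambda s}$, since the integral tends to $d\bar t(z_0)$, in general nonzero; so the competition is exponential against exponential at the same rate, not exponential against logarithmic as you state. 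The product does converge, to $-\dot z(\bar t(z_0)_-,z_0)\,d\bar t(z_0)$, but only once one proves that $\p_{z_0}(r\circ\tilde z)$ decays exponentially along $S^s$ --- again a foliation-regularity fact. So your plan is completable, but the key analytic steps are precisely what the cited normal form packages, and they are not supplied by the tools you invoke.
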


\begin{proof}
Both $\dot{z}(\bar{t}(z_0)_{\pm},z_0)$ are easily defined since the
control along an extremal has well defined right and left limits
at a switching time. 
Then, we already notice in \cite{orieuxcaillau18}, thanks to the normal form of proposition 2 that the map $z_0\mapsto \bar{z}(z_0)$ that associates to $z_0\in S^s$ the contact point with $\Sigma$ of the extremal initializing at $z_0$. Thus $\bar{z}(z_0)=z(\bar{t}(z_0),z_0)$ and the map $z_0\in S^s\mapsto z(\bar{t}(z_0),z_0)$ is smooth. This conclude the proof. 
\end{proof}
 
\noi For extremals outside $S^s$, the flow of the maximized Hamiltonian is smooth, 
and the usual sufficient conditions for optimality apply.
Let us denote $\bar{z}(t)$ our reference extremal, lying in $S^s$, with final time
 $\bar{t}_f$ and $\bar{t}:=\bar{t}(z_0)$, $\bar{z}(\bar{t}):=\bar{z}$. 
We assume that
the fiber $T^*_{\bar{x}_0}M$ and $S^s$ intersect transversally:
\begin{equation} \tag{A2}
  T^*_{\bar{x}_0}M\pitchfork S^s
\end{equation}
so $T^*_{\bar{x}_0}M\cap S^s$ is a smooth submanifold 
of dimension three.

\begin{definition}[exponential map]
	We call exponential mapping from $x_0$ the application
	\[ \exp_{\bar{x}_0}:(t,p_0)\in[0,t_f]\times T^*_{\bar{x}_0}M\cap S^s
	\rightarrow \pi(z(t,x_0,p_0)). \]
	\label{exp}
\end{definition}

\noi The exponential map is smooth except on $\Delta$, that is
when $x(t,x_0,p_0)\notin\Sigma$. 
The differential of the exponential mapping 
$d\exp_{x_0}(t,p_0)=(\dot{x}, \frac{\p x}{\p p_0})(t,p_0)$ is a $4\times4$ matrix, where
$\frac{\p}{\p p_0}$ denote the derivation with respect to a set of coordinates on 
$T^*_{x_0}M\cap S^s$. Set $M(t):=d\exp_{\bar{x}_0}(t,\bar{p}_0)$.

\begin{theorem}
Assume that \begin{itemize}
\item[(i)] The reference extremal is normal,
\item[(ii)] $\det M(t)\neq 0$ for all $t\in(0,\bar{t})\cup(\bar{t},\bar{t}_f)$ 
and $\det M(\bar{t}_-)\det M(\bar{t}_+)\neq0$,
\end{itemize}
then the reference trajectory is a minimizer among all $\C^0$ neighboring  trajectories
with same endpoints.
\label{theoremsuf}
\end{theorem}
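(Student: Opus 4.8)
The plan is to transport the field-of-extremals argument behind Theorem~\ref{sufsmooth} to the present piecewise-smooth situation, the Poincar\'e--Cartan integral invariant replacing the smooth Hamilton--Jacobi computation across the single switch at $\bar t$. Since the reference extremal is normal (hypothesis (i)) I normalise $p^0=-1$, so that the extremals I compare against lie on the level set $\{H^{\max}=0\}$. First I would assemble the field: flowing out the three-dimensional manifold of initial covectors by the maximised Hamiltonian produces the image of the exponential map of Definition~\ref{exp}, an isotropic submanifold $\L$ along which the Poincar\'e--Cartan form $p\,\d x-H^{\max}\,\d t$ is closed. By Theorem~\ref{theorem} the nearby extremals are bang-bang with at most one switch, those that switch issuing from $S^s$; the transversality assumption (A2) makes $T^*_{\bar x_0}M\cap S^s$ a clean submanifold, so $\L$ splits into two smooth Lagrangian sheets $\L_-$ (for $t<\bar t$) and $\L_+$ (for $t>\bar t$), joined continuously along the image of $\Sigma$ (item~3 of Theorem~\ref{theorem}) and smooth off the crease $\Delta$.

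Next I would descend the invariant to the base. On each sheet the differential of $\pi\circ\exp_{\bar x_0}$ is recorded by $M(t)$, and hypothesis (ii) --- $\det M(t)\neq0$ on $(0,\bar t)\cup(\bar t,\bar t_f)$ together with $\det M(\bar t_-)\det M(\bar t_+)\neq0$, the one-sided limits existing by the Proposition --- makes $\pi|_{\L}$ a local diffeomorphism on each sheet up to and including the contact with $\Sigma$. The closed form $p\,\d x-H^{\max}\,\d t$ therefore admits a primitive, and since the flow is continuous across $\Delta$ its one-sided limits match, so the primitive descends through $\pi|_{\L}$ to a single Lipschitz function $\mathcal{S}$ defined on a whole neighbourhood $V$ of the reference trajectory, smooth off the image of $\Sigma$, with $\d\mathcal{S}=p\,\d x$ on $\{H^{\max}=0\}$, where $p=p(x)$ is the covector of the sheet lying over $x$.

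With $\mathcal{S}$ in hand the comparison is the standard one. Any admissible competitor $x(\cdot)$ with the same endpoints and $\C^0$-close to the reference stays in $V$ and lifts to $\tilde p(t)=p(x(t))\in\L$. On $\{H^{\max}=0\}$ one has $H_0+\sqrt{H_1^2+H_2^2}=1$, so maximisation of the pseudo-Hamiltonian over the disk (Cauchy--Schwarz, since $u_1^2+u_2^2\le1$) gives $\langle\tilde p,F_0+u_1F_1+u_2F_2\rangle\le H_0+\sqrt{H_1^2+H_2^2}=1$ for every admissible $u$. Hence
\[ t_f=\int_0^{t_f}1\,\d t\ \ge\ \int_0^{t_f}\langle\tilde p,\dot x\rangle\,\d t=\int_0^{t_f}\tfrac{\d}{\d t}\mathcal{S}(x(t))\,\d t=\mathcal{S}(x_f)-\mathcal{S}(x_0), \]
whereas along the reference the maximisation is an equality, $\langle\bar p,\dot{\bar x}\rangle\equiv1$, so $\mathcal{S}(x_f)-\mathcal{S}(x_0)=\bar t_f$. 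Combining the two gives $\bar t_f\le t_f$, which is the asserted local minimality.

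The heart of the matter, and the step I expect to be the main obstacle, is the gluing at the switch. Off $\bar t$ everything reduces to Theorem~\ref{sufsmooth}; what must be proved is that the two Lagrangian sheets assemble into one object on which $p\,\d x-H^{\max}\,\d t$ is still exact and whose projection covers $V$ without folding, so that $\mathcal{S}$ is genuinely single-valued across the image of $\Sigma$. This is exactly where the one-sided limits of the Jacobi fields enter beyond their mere existence: the non-vanishing of $\det M(\bar t_-)\det M(\bar t_+)$ forces $\pi|_{\L_-}$ and $\pi|_{\L_+}$ to be compatible local diffeomorphisms at the contact point, preventing the field of extremals from folding back on itself at the switch; were the two determinants to disagree, $\mathcal{S}$ would become multivalued and the comparison would collapse. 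The remaining, classical, difficulty is the focal degeneracy at $t=0$, where $\det M(0)=0$ necessarily because the whole initial fibre collapses onto $\bar x_0$; this is handled as usual by checking that $\mathcal{S}$ extends continuously with $\mathcal{S}(x_0)=0$, the field covering a punctured neighbourhood of $x_0$ for small $t>0$. Together these yield the piecewise regularity of the field and the stated optimality.
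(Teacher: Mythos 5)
Your skeleton is the paper's own: build a field of extremals over the switching stratification, show the Poincar\'e--Cartan form $p\,\d x-H^{\max}\d t$ is exact on it, project it diffeomorphically to the base, and run the standard comparison (your Cauchy--Schwarz step is exactly the paper's maximality inequality). But at the two places where the paper does actual work, your text substitutes an assertion for a proof, and both are genuine gaps. The first is the degeneracy at $t=0$. You flow out the fiber $T^*_{\bar x_0}M\cap S^s$ itself and dismiss $\det M(0)=0$ as ``handled as usual'' by continuity of $\mathcal{S}$ over a punctured neighbourhood. The paper explicitly flags this as a dedicated part of the proof and resolves it by Lemma~\ref{lemlag}: the fiber is replaced by a Lagrangian graph $\mathcal{L}$ (built from a symmetric matrix $S_0>0$, $L_0=\{\delta x_0=S_0\delta p_0\}$) transverse to $S^s$ and close to the fiber, and one must \emph{prove} that hypothesis (ii), which concerns only the reference extremal, forces invertibility of $\bigl(\frac{\p x}{\p t},\frac{\p x}{\p z_0}\bigr)$ on all of $[0,\bar t)\times\mathscr{S}_0$ and $(\bar t,t_f]\times\mathscr{S}_0$ with $\mathscr{S}_0=\mathcal{L}\cap S^s$; this rests on the monotonicity $\dot S(t)\ge 0$ of $S(t)=\delta\tilde x(t)\delta\tilde p(t)^{-1}$, a consequence of the first and second order conditions on $H^{\max}$. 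Without this lemma (or a genuine proof of your punctured-neighbourhood covering claim, which you do not give), there is no single-valued primitive $\mathcal{S}$ defined on a full neighbourhood of the trajectory including its initial point, and the comparison integral cannot even be written down.

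The second gap is the gluing at $\bar t$, which you yourself call the heart of the matter and then do not prove: you state that $\det M(\bar t_-)\det M(\bar t_+)\neq 0$ ``forces'' the two sheets to project compatibly without folding, but that is precisely the claim requiring an argument. The paper supplies it by a transversality computation: with $z_0(t,x)$ the local inverse of the projection, $f(t,x)=t-\bar t(z_0(t,x))$ and $g(t)=f(t,\bar x(t))$, using $\frac{\p z_0}{\p t}=-\bigl(\frac{\p x}{\p z_0}\bigr)^{-1}\dot x$ one finds $\dot g(\bar t_-)=\dot g(\bar t_+)=1$, so every nearby extremal crosses $\pi(\Sigma_1)$ transversally; consequently $\pi(\mathscr{S}_1)$ and $\pi(\mathscr{S}_2)$ meet only along $\pi(\Sigma_1)$ and $\pi$ is injective (indeed a homeomorphism on a compact neighbourhood) on $\mathscr{S}_1\cup\mathscr{S}_2$. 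Your side remark that ``were the two determinants to disagree, $\mathcal{S}$ would become multivalued'' also conflates hypothesis (ii) with the strictly stronger condition (ii$'$) (equal signs of the one-sided determinants) that the paper introduces only later, for the PC$^1$ regularity of the final-time map: Theorem~\ref{theoremsuf} assumes nothing about signs, only that both one-sided limits are nonzero, and the paper's crossing argument never uses sign agreement. Finally, a cosmetic point: the paper's exactness argument (retracting loops to $t=0$ and choosing $\mathcal{L}$ as the graph of an exact differential) is also needed once the fiber is replaced by $\mathcal{L}$, since $p\,\d x$ no longer vanishes identically on the initial slice.
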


\noi Obviously when $t=0$, $\frac{\p x}{\p p_0}(0,\bar{z}_0)=0$, 
and some part of the proof is dedicated to extend condition (ii) to
$t=0$.

\section{Proof of theorem \ref{theoremsuf}}
\label{sec:proofsuf}
\noi The rest of the paper is devoted to prove Theorem \ref{theoremsuf}.
 \begin{lemma}
Condition
(ii) implies that there exists a Lagrangian submanifold $\mathcal{L}$ transverse
to  $T^*_{x_0}M$, and close enough to $T^*_{x_0}M$ so 
$\mathscr{S}_0=\mathcal{L}\cap S^s$ is
a smooth submanifold of dimension 3, and such that 
$(\frac{\p x}{\p t},\frac{\p x}{\p z_0})(t,\bar{z}_0)$ is invertible on 
$[0,\bar{t})\times\mathscr{S}_0$, as well as on  $(\bar{t},t_f]\times\mathscr{S}_0$
($z_0$ denoting coordinates on $\mathscr{S}_0$). 
\label{lemlag}
\end{lemma}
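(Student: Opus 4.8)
The plan is to obtain $\mathcal{L}$ by \emph{tilting} the fibre $T^*_{\bar x_0}M$ into a nearby Lagrangian graph, and to transport the disconjugacy information carried by condition (ii) onto it. In Darboux coordinates centred at $\bar z_0=(\bar x_0,\bar p_0)$, in which the fibre is $\{x=\bar x_0\}$, I would set, for a parameter $\alpha>0$ and a symmetric matrix $Q$,
\[ \mathcal{L}_\alpha=\Big\{(x,p)\ :\ p=\bar p_0+\tfrac1\alpha\,Q\,(x-\bar x_0)\Big\}, \]
which is the graph of $\d S$ for the quadratic generating function $S(x)=\langle \bar p_0,x\rangle+\tfrac1{2\alpha}\langle Q(x-\bar x_0),x-\bar x_0\rangle$, hence automatically Lagrangian, smooth, and passing through $\bar z_0$. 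For every $\alpha>0$ it is transverse to $T^*_{\bar x_0}M$ (a graph meets the vertical fibre only at the origin), while its tangent space at $\bar z_0$ converges to the vertical as $\alpha\to0$, so that $\mathcal{L}_\alpha$ is $\C^1$-close to the fibre near $\bar z_0$. The sign of $Q$ will be fixed later so as to match the sign of $\det M$ near the origin, which is the place where normality (i) enters through positivity of the associated second variation.

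Because $T^*_{\bar x_0}M\pitchfork S^s$ by (A2) and transversality is an open condition, for $\alpha$ small enough $\mathcal{L}_\alpha\pitchfork S^s$, so that $\mathscr{S}_0:=\mathcal{L}_\alpha\cap S^s$ is a smooth submanifold of dimension $4+7-8=3$, as required. On the two open arcs the extremal flow is smooth by Theorem~\ref{theorem}, and the Jacobian $D_\alpha(t):=\det\big(\tfrac{\p x}{\p t},\tfrac{\p x}{\p z_0}\big)(t,\bar z_0)$ computed along $\mathscr{S}_0$ depends continuously on $\alpha$, with $D_0(t)=\det M(t)$. Since $\det M(t)\neq0$ on every compact subinterval of $(0,\bar t)$ and of $(\bar t,\bar t_f)$, and since the one-sided limits $\det M(\bar t_\pm)$ are nonzero by (ii), uniform convergence on $[\veps,\bar t]$ and on $[\bar t,\bar t_f]$ gives $D_\alpha(t)\neq0$ there for $\alpha$ small; the switch is handled by working on each bang arc separately, where the flow extends smoothly up to $\bar t_\mp$. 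By openness of the condition $\det\neq0$, this pointwise invertibility at $\bar z_0$ then propagates to a whole neighbourhood, after shrinking $\mathscr{S}_0$ around $\bar p_0$.

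The heart of the matter, and the reason the fibre itself cannot be used, is the behaviour at $t=0$: on the fibre $\tfrac{\p x}{\p p_0}(0)=0$, so $\det M(0)=0$, whereas after tilting $\pi|_{\mathcal{L}_\alpha}$ is a local diffeomorphism and $\tfrac{\p x}{\p z_0}(0)$ has rank three, whence $D_\alpha(0)$ is a nonzero multiple of $\det Q$. I would expand $D_\alpha(t)$ for small $t$, identify the leading order $\det M(t)\sim c\,t^{3}$ at the origin (each column of $\tfrac{\p x}{\p z_0}$ vanishing linearly in $t$), and show that for a definite choice of $Q$ of the right sign $D_\alpha$ keeps the sign of $\det M$ on $(0,\veps)$ while remaining nonzero at $t=0$. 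The \emph{main obstacle} is precisely that the two limits $t\to0^+$ and $\alpha\to0^+$ do not commute: for fixed small $t$ one has $D_\alpha(t)\to\det M(t)$, which may be arbitrarily small, while for fixed $\alpha$ the value $D_\alpha(0)$ does not degenerate. Overcoming this requires a uniform estimate matching the scale of the tilt $\alpha$ to the leading order of $\det M$ near $0$, so that a single sufficiently small $\alpha$ yields $D_\alpha(t)\neq0$ simultaneously on a full interval $[0,\veps]$.

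Finally I would patch the pieces: a single $\alpha$ chosen small enough to validate the estimate on $[0,\veps]$, on $[\veps,\bar t)$, and on $(\bar t,t_f]$ (the final time being covered by the same continuity argument, the flow being smooth there), and small enough to preserve $\mathcal{L}_\alpha\pitchfork S^s$, produces one Lagrangian manifold $\mathcal{L}=\mathcal{L}_\alpha$ meeting every requirement of the statement, which proves the lemma.
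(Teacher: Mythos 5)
Your construction is the same one the paper uses: your graph $\mathcal{L}_\alpha=\{p=\bar p_0+\tfrac1\alpha Q(x-\bar x_0)\}$ is, at the linear level, exactly the tilted Lagrangian subspace $L_0=\{\delta x_0=S_0\,\delta p_0\}$ of the paper with $S_0=\alpha Q^{-1}$, and the treatment away from $t=0$ (openness of $\det\neq0$ plus condition (ii), arc by arc across the switch) matches as well. The problem is the step you yourself flag as ``the heart of the matter'': you name the obstruction --- the limits $t\to0^+$ and $\alpha\to0^+$ do not commute, so one needs $D_\alpha(t)\neq0$ \emph{uniformly} on $[0,\veps]$ for a single small $\alpha$ --- but you do not overcome it. The expansion you propose does not close the gap: first, condition (ii) only asserts $\det M(t)\neq0$ on the open interval $(0,\bar t)$ and gives no control whatsoever on the rate at which $\det M(t)$ degenerates as $t\to0^+$, so the ansatz $\det M(t)\sim c\,t^3$ with $c\neq0$ is an extra, unjustified hypothesis; second, even granting it, near $t=0$ one has schematically $\tfrac{\p x}{\p z_0}(t)\approx \alpha Q^{-1}+tB$ with $B$ coming from the Hessian of the Hamiltonian, and without a sign condition on $B$ nothing prevents cancellation between the two terms at the intermediate scale $t\sim\alpha$. ``Choosing $Q$ of the right sign'' cannot fix this by itself, because the needed sign information concerns $B$, not $Q$.

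What actually supplies the uniform estimate in the paper is a Riccati-type monotonicity argument, and this is the idea missing from your proposal. One introduces a second linearized flow, $\dot\phi(t)=\tfrac{\p H}{\p z}(\bar z(t),u(t))\,\phi(t)$, $\phi(0)=I$, associated with the \emph{non-maximized} Hamiltonian along the reference control, passes to the moving frame $\delta\tilde z(t)=\phi(t)^{-1}\delta z(t)$, and sets $S(t)=\delta\tilde x(t)\,\delta\tilde p(t)^{-1}$. Symmetry of $S(t)$ follows because both $L_t=\exp(X_{H^{\max}}t)'(L_0)$ and $\phi(t)^{-1}(L_t)$ are Lagrangian, and the first- and second-order conditions on the maximized Hamiltonian (Legendre-type convexity, where the structure of $H^{\max}$ enters --- not normality, as you suggest) give $\dot S(t)\geq0$. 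Hence $S_0>0$ implies $S(t)\geq S_0>0$ on the whole interval of definition, so $S(t)$ is invertible for \emph{every} $t\in[0,\veps]$ at once; this yields $\phi(t)^{-1}(L_t)\pitchfork\ker d\pi(\bar z_0)$, and since $\phi(t)(\ker d\pi(\bar z_0))=\ker d\pi(\bar z(t))$, the transversality $L_t\pitchfork\ker d\pi(\bar z(t))$ holds uniformly near $t=0$ with no interchange of limits. Your scale-matching heuristic is, in effect, a weaker substitute for precisely this positivity-plus-monotonicity mechanism; without it the proof is incomplete at its critical point.
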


\noi Moreover,
$(\frac{\p x}{\p t},\frac{\p x}{\p z_0})(\bar{t}_-,\bar{z}_0)$
and $(\frac{\p x}{\p t},\frac{\p x}{\p z_0})(\bar{t}_+,\bar{z}_0)$
are invertible.
Thus, the canonical projection $\pi$ is a diffeomorphism from 
$z((0,\bar{t})\times \mathscr{S}_0)$ onto its image
and an homeomorphism from
\begin{equation}
\mathscr{S}_1=\{z(t,z_0),\; (t,z_0)\in[0,\bar{t}(z_0)]\times\mathscr{S}_0\}
\end{equation}
onto its image. The same holds true for
\begin{equation}
\mathscr{S}_2=\{z(t,z_0),\; (t,z_0)\in[\bar{t}(z_0),t_f]\times\mathscr{S}_0\}.
\end{equation}
Let us prove that $\pi$ is a homeomorphism on their union. It is sufficient to prove
that the extremal crosses transversally
$\Sigma_1:=\Sigma\cap \mathscr{S}_1$. Since the map 
$(t,z_0)\in\R\times \mathscr{S}_0 \mapsto x(t,z_0)\in\pi(\mathscr{S}_1)$ is a 
homeomorphism, and is differentiable for all $(t,z_0)\neq(\bar{t}(z_0),z_0)$ with well
defined limits, we can define its inverse function $z_0(t,x)$, and 
$f(t,x)=t-\bar{t}(z_0(t,x))$. Thus we have $\Sigma_1=\{f=0\}$.
Now denote $g(t)=f(t,\bar{x}(t))$, we get
\begin{equation*}
\dot{g}(\bar{t}_-)=1=\dot{g}(\bar{t}_+)-d\bar{t}(\bar{z}_0)
\left[\frac{\p z_0}{\p t}+\frac{\p z_0}{\p x}\dot{\bar{x}}(t)\right].
\end{equation*}
Since \[ \frac{\p z_0}{\p t}(t,z_0(t,x)))=-\left(\frac{\p x}{\p
z_0}\right)^{-1}\dot{x}(t,z_0(t,x)), \]
we obtain $\dot{g}(\bar{t}_-)=\dot{g}(\bar{t}_+)=1$. In a neighbourhood of 
$\bar{z}$, every extremal passes transversely through $\pi(\Sigma_1)$: by restricting
 $\mathscr{S}_0$ if necessary, every extremal from  $\mathscr{S}_0$ passes transversely
  through $\pi(\Sigma_1)$, and the projection defines a continuous one to one mapping on 
$\mathscr{S}_1\cup\mathscr{S}_2$, and even a homeomorphism if we restrict ourselves to a compact neighbourhood of the reference extremal.

We will now prove that the Poincar\'e-Cartan form $\sigma=pdx-H^{\max}dt$ is exact on $\mathscr{S}_1$ and
$\mathscr{S}_2$. Let us first prove that $\sigma$ is closed on $\mathscr{S}_i$. 
Tangent vectors to $\mathscr{S}_1$ at $z$ are parametrized as follows:
$$\dot{z}(t,z_0)\delta t+\frac{\p z}{\p z_0}(t,z_0)\delta z_0,$$ with
 $(\delta t,\delta z_0)\in\R\times T_{z_0}\mathscr{S}_0,\; z(t,z_0)=z,$
whenever $z\notin\Sigma$. In that last case, tangent vectors
are given by 
$$ \dot{z}(t_-,z_0)\delta t+\frac{\p z}{\p z_0}(t_-,z_0)\delta z_0,$$
with $(\delta t,\delta z_0)\in\R\times T_{z_0}\mathscr{S}_0,\; z(t,z_0)=z.$
Let $(v_1,v_2)\in T\mathscr{S}_i$, we have
\begin{equation*}
\begin{split}
d\sigma(v_1,v_2)=dp\wedge dx(\frac{\p z}{\p z_0}(t,z_0)\delta z_0^1,
\frac{\p z}{\p z_0}(t,z_0)\delta z_0^2)\\
-dH\wedge dt(v_1,v_2)=\omega (\delta z_0^1,\delta z_0^2)
\end{split}
\end{equation*}
because the flow is symplectic on $S^s$, and $dH.\dot{z}=0$. Eventually,
$\omega (\delta z_0^1,\delta z_0^2)=0$ since $\mathscr{S}_0\subset\mathcal{L}$
 is isotropic. This equality still holds for 
tangent vectors at $(\bar{t}(z_0),z(\bar{t}(z_0),z_0))$.
Being closed, the Poincar\'e form is actually exact on each $\mathscr{S}_i$. Indeed, consider a
curve $\gamma(s)=(t(s),z(t(s),z_0(s)))$ on $\mathscr{S}_1\cup\mathscr{S}_2$: it retracts 
continuously on $\gamma_0(s)=(0,z_0(s))$. Then, since $\sigma$ is closed, 
$$\int_\gamma\sigma=\int_{\gamma_0}\sigma,$$ and one can chose
 $\mathcal{L}$ as the graph of the differential of a smooth function, so
$$\int_{\gamma_0}\sigma=0$$
by Stokes formula.
Let us finally prove that our reference extremal $t\in[0,\bar{t}_f]\mapsto\bar{z}(t)=(\bar{x}(t),\bar{p}(t))$  
minimizes the final time among all close $\C^1$-curves with same endpoints.
Let $x(t)$, $t\in[0,t_f]$ be a $\C^1$ admissible curve, generated
 by a control $u$ with $x(0)=x_0$, $\C^0$ close to $\bar{x}$, then, denote $z(t)=(x(t),p(t))$
  its well defined lift in $\mathscr{S}_1\cup\mathscr{S}_2$. Then
  $$\int_0^{t_f}p^0=\int_0^{t_f}p.\dot{x}-H(x,p,u)dt\geq
  \int_0^{t_f}p\dot{x}-H^{\max}(x,p)dt.$$
Since $\sigma$ is exact, the right-hand side is actually
$$\int_z \sigma=\int_{\bar{z}}\sigma$$
Thus, 
\[t_fp^0\leq\int_{\bar{z}}\sigma=\bar{t}_fp^0, \]
which proves local optimality for the reference trajectories in the normal case, 
among all $\C^0 $-close curves that have
$\C^1$ regularity. A perturbation argument allows us to conclude on optimality with
respect to all continuous admissible curves; which ends the proof of Theorem~\ref{theoremsuf}.\\

In the very specific case when $T^*_{x_0}M\subset S^s$, one has to change a
 bit the exponential mapping defined above, but the same proof
basically holds.

\begin{proof}[Proof of Lemma \ref{lemlag}] We follow and adapt the proof in
\cite{caillau2016} appendix.
Let $S_0$ be a symmetric matrix
so that the Lagrangian subspace $L_0=\{\delta x_0=S_0\delta p_0 \}$ 
intersects transversely $T_{\bar{z}_0}S^s$. Consider the two linear symplectic
systems
\[\delta\dot{z}(t)=\frac{\p H^{\max}}{\p z}(\bar{z}(t))\delta z(t)\]
$t\in[0,\bar{t}[, \delta z(0)=(S_0,I) $
and 
\[\dot{\phi}(t)=\frac{\p H}{\p z}(\bar{z}(t),u(t))\phi(t), \; t\in[0,\bar{t}[, \phi(0)=I.\]
Set $\delta \tilde{z}(t)=(\delta\tilde{x}(t),\delta\tilde{p}(t))=\phi(t)^{-1}\delta z(t)$. 
Since $\delta\tilde{z}(0)=\delta z(0)=(S_0,I)$, the matrix $$S(t)=\delta \tilde{x}(t)\delta\tilde{p}(t)^{-1}$$
exists for small enough $t$. It is symmetric since 
$$L_t=\exp(X_{H^{\max}}t)'(L_0)\text{ and } (\phi(t))^{-1}(L_t)$$
are Lagrangian submanifolds.
One can prove that $\dot{S}(t)\geq0$  (see \cite{caillau2016}, annex),  whenever $S(t)$ is defined, as the consequence of the classical first
and second order conditions on the maximized Hamiltonian.
Then, if $S_0>0$ (small enough so that $S(t)$ is defined on $[0,\epsilon]$), $S(t)$ is invertible, and as such, 
$\phi(t)^{-1}(L_t)\pitchfork \ker d\pi(\bar{z}_0)$. 
This implies $L_t\pitchfork\ker d\pi(\bar{z}(t))$ since
$\phi(t)(\ker d\pi(\bar{z}_0))=\ker d\pi(\bar{z}(t))$.
There exists a Lagrangian submanifold $\mathcal{L}_0$ of $T^*M$ 
tangent at $L_0$ in $\bar{z}_0$. It intersects $S^s$ transversely,
and the lemma follows.
\end{proof}

\section{Regularity of the field of extremals} \label{sec:valuefunc}
\noi Fix $x_0\in M$, the value function associates to a final state 
the optimal cost, and is defined as
$$S_{x_0}:x_f\in M\mapsto \inf_{u\in\mathcal{U}}\{t_f,\; x(t_f,u)=x_f \}\in\mathbb{R}.$$
It defines a pseudo-distance between $x_0$ and $x_f$ and
its regularity is a crucial information in optimal control problem,
especially in sub-Riemannian geometry where it defines the
distance. We give the regularity of the final time for extremals
that are locally optimal under the assumptions of the previous section.
If they are globally optimal (which holds true for small enough
times), this final time coincides with the value function while, otherwise, we
only obtain the regularity of an upper bound to the value function.
Actually, since the differential equation is homogeneous in 
the adjoint vector, one can restrict to the unitary 
bundle of the cotangent bundle $ST^*M$, and consider 
$$\exp:(t_f,p_0)	\in \mathbb{R}_+\times ST^*_{x_0}(M)\mapsto x(t_f,x_0,p_0).$$
The authors have shown in \cite{orieuxcaillau18} that this function is piecewise 
smooth, and belongs to the log-exp category. There are two cases:

\paragraph{First case} In the neighbourhood of 
$(x_0,\bar{p}_0)\notin S^s$, the extremal flow, as well as $F(t_f,p_0):=\exp(t_f,p_0)-x_f$, 
are smooth. If $dF(\bar{t}_f,\bar{p}_0)$ is invertible, that is
if $\det(\dot{x}(t,x_0,\bar{p}_0),\frac{\p x}{\p p_0}
(t,x_0,\bar{p}_0))\neq 0$, for all $t$, where $p_0$ is a system of 
coordinates on $ST^*(M)$ around $(x_0,\bar{p}_0)$ then, locally, 
we have a $\C^1$ inverse $F^{-1}(x_f)=(t_f,p_0)(x_f)$.
This is the well-known smooth case.

\paragraph{Second case} In the neighbourhood $(x_0,\bar{p_0})\in S^s$,
then replace the previous definition of exponential mapping by 
$$\exp:(t_f,p_0)\in \mathbb{R}_+\times S^s\cap T_{x_0}^*M\mapsto x(t_f,x_0,p_0).$$
Under the transversality condition, $S^s\cap T_{x_0}^*M$ is a smooth 
\noi $3$-dimensional submanifold, and since the flow is smooth on $S^s$, the same process 
can be applied with the same result, except when $x_f\in \Sigma$.
In such a neighbourhood, we only have PC$^1$ regularity
and we need a weaker inverse function theorem. 
We use a result from \cite{implPC1}.
 
\begin{theorem}
Assume
\begin{itemize}
\item[(i)] $\det(\dot{x}(t,\bar{z}_0),\frac{\p x}{\p p_0}
(\bar{t}_f,\bar{z}_0))\neq 0$ for all $t\neq\bar{t}$
\item[(ii')] the two determinants
\[ \det(\dot{x}(\bar{t}_-,\bar{z}_0),\frac{\p x}{\p p_0}
  (\bar{t}_{f-},\bar{z}_0)),\ 
  \det(\dot{x}(\bar{t}_+,\bar{z}_0),\frac{\p x}{\p p_0}
  (\bar{t}_{f+},\bar{z}_0)) \]
have the same sign.
\end{itemize} 
Then the final time $x_f\mapsto t_f(x_f)$, is continuous and piecewise $\C^1$ in a neighbourhood of $x(\bar{t}_f,x_0,\bar{p}_0)$.
\end{theorem}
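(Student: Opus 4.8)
The plan is to obtain the final time as the first component of a local inverse of the exponential map $\exp(t_f,p_0)=x(t_f,x_0,p_0)$, and to produce that inverse by a piecewise-$\C^1$ inverse function theorem. The non-smoothness of $\exp$ is concentrated on the graph $\Delta$ of the switching time, and the relevant case here---the endpoint $x(\bar{t}_f,x_0,\bar{p}_0)$ lying on $\Sigma$, equivalently $\bar{t}_f=\bar{t}$---places the base point of the inversion exactly on $\Delta$, so that only PC$^1$ regularity is available. First I would record the regularity of $\exp$ on the $4$-dimensional domain $\R_+\times(S^s\cap T^*_{x_0}M)$, using that $S^s\cap T^*_{x_0}M$ is a smooth $3$-manifold by (A2). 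By Theorem~\ref{theorem} and the proposition above, the flow is smooth off $\Delta$, continuous across it, and admits smooth one-sided extensions up to $\Delta$; hence $\Delta$ is a $\C^1$ hypersurface that splits a neighbourhood of $(\bar{t}_f,\bar{p}_0)$ into a before-switch and an after-switch region, on each of which $\exp$ has a $\C^1$ extension. This exhibits $\exp$ as a PC$^1$ map whose two selection maps agree, together with their first derivatives, along $\Delta$.

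Next I would identify the selection Jacobians of $\exp$ at the base point with the two $4\times4$ matrices of (ii'), namely $M_-=(\dot{x}(\bar{t}_-,\bar{z}_0),\frac{\p x}{\p p_0}(\bar{t}_{f-},\bar{z}_0))$ and $M_+=(\dot{x}(\bar{t}_+,\bar{z}_0),\frac{\p x}{\p p_0}(\bar{t}_{f+},\bar{z}_0))$, and invoke the PC$^1$ inverse function theorem of \cite{implPC1}. A theorem of this type asserts that a PC$^1$ map is a local homeomorphism with PC$^1$ inverse as soon as its B-derivative (the piecewise-linear directional derivative) at the base point is a homeomorphism of $\R^4$. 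For a two-branch map glued along a hyperplane this B-derivative equals $M_-$ on one closed half-space and $M_+$ on the other; by the standard description of piecewise-linear maps it is a homeomorphism precisely when $M_-$ and $M_+$ are invertible with $\det M_-$ and $\det M_+$ of the same sign (coherent orientation). Hypotheses (i) and (ii') are tailored to this: (i) gives nonsingularity of the branch differential at every time $t\neq\bar{t}$, hence invertibility off $\Delta$, while (ii') is exactly the equal-sign condition at the switch.

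Granting these, the cited theorem provides a neighbourhood of $x(\bar{t}_f,x_0,\bar{p}_0)$ on which $\exp$ has a continuous, piecewise-$\C^1$ inverse $x_f\mapsto(t_f(x_f),p_0(x_f))$; projecting onto the first factor yields $x_f\mapsto t_f(x_f)$, which is therefore continuous and piecewise $\C^1$, as claimed. The hard part will be the passage from the geometric data to the abstract hypotheses of \cite{implPC1}: one must check that the two one-sided $\C^1$ extensions of $\exp$ match to first order along $\Delta$, so that the B-derivative is a well-defined continuous piecewise-linear map---this rests on the continuity of the flow and on the smoothness of the contact map $z_0\mapsto\bar{z}(z_0)$ recorded above---and then that this B-derivative is a homeomorphism exactly under (ii'). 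The latter is the elementary but essential fact that a two-piece piecewise-linear map is bijective iff its linear pieces are coherently oriented, which is what (ii') encodes.
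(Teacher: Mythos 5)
Your proposal is correct and follows essentially the same route as the paper: the paper's proof is exactly the observation that (i) and (ii') let one apply the PC$^1$/semismooth inverse function theorem (Theorem~3 of the cited Gowda reference) to the exponential map on $\R_+\times(S^s\cap T^*_{x_0}M)$, producing a continuous, piecewise-$\C^1$ inverse $x_f\mapsto(t_f(x_f),p_0(x_f))$, and your write-up simply fills in the details (PC$^1$ structure of $\exp$ along $\Delta$, identification of the selection Jacobians with the matrices of (ii'), coherent orientation of the two-piece B-derivative) that the paper leaves implicit. One caveat on wording: the two selection Jacobians agree only in directions tangent to $\Delta$ (full first-order agreement would force $M_-=M_+$ and render (ii') vacuous), and this tangential agreement is precisely what makes the B-derivative a well-defined continuous piecewise-linear map, as your final paragraph in fact uses.
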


\begin{proof}
Thanks to (i) and (ii') we have a PC$^1$ inverse, by Theorem~3 in
\cite{implPC1}, so $x_f\mapsto(t_f(x_f),p_0(x_f))$ is
piecewise continuously differentiable.
\end{proof}
 
\noi Obviously (ii') implies (ii) in Theorem~\ref{theoremsuf}, and the extremal is
locally optimal.
When it is globally optimal, the value function is
$S(x_f)=t_f(x_f)$, the final time of the extremal.
Otherwise, $S(x_f)\leq t_f$ 
and we only have PC$^1$ regularity for an upper bound function to the value function.

\section{Conclusions}
We managed to go beyond the smooth case, and tackle the case of optimization of the final time, inducing a Hamiltonian admitting singularities, for a large class of control-affine, that includes mechanical systems.  Though the Hamiltonian itself has just Lipschitz regularity, we show that the necessary tools to prove optimality conditions as well as a disconjugacy hypothesis can still be defined. An interesting development of this work would involve simulations and study of the behavior of switching and conjugate times for simple problem from space mechanics, namely, a two or restricted circular three bodies problem.

\end{document}